\newtheorem{theorem}{Theorem}[section]
\newtheorem{corollary}[theorem]{Corollary}
\newtheorem{lemma}[theorem]{Lemma}
\newtheorem{proposition}[theorem]{Proposition}
\theoremstyle{definition}
\newtheorem{definition}[theorem]{Definition}
\newtheorem{remark}[theorem]{Remark}
\newtheorem{example}[theorem]{Example}
\begin{document}

\centerline{} \centerline{} \centerline{\Large{\bf {The Space of
Integrable Dirac Structures on Hilbert C*-Modules}}}
\centerline{}\centerline{\Large{\bf
 }}
     \centerline{\bf{ Vida Milani}}
  \centerline{ {\it Dept. of Math., Faculty of Math. Sciences, Shahid Beheshti university, Iran}}

\centerline{{\it School of Mathematics, Georgia Institute of Technology, Atlanta, USA}}

\centerline{e-mail: v-milani@cc.sbu.ac.ir}

\vline

  \centerline{\bf {Seyed M.H. Mansourbeigi}}
  \centerline{{\it Dept. of Electrical Engineering, Polytechnic University, NY, USA}}
  \centerline{e-mail: s.masourbeigi@ieee.org}
\vline

\centerline{\bf{Hassan Arianpoor}}
  \centerline{{\it Dept. of Math., Faculty of Math. Sciences, Shahid Beheshti university, Iran}}
  \centerline{e-mail: h\_arianpoor@sbu.ac.ir }
  \vline

\begin{abstract}

 In this paper we interpret the integrability of the Dirac structures
 on some Hilbert C*-modules in terms of an automorphism group.
  This is the group of orthogonal transformations
 on the Hilbert C*-module of sections of a Hermitian vector
 bundle over an smooth manifold $M$. Some topological properties
 of the group of integrable Dirac structures are studied. In some
 special cases it is shown that the integrability condition
 corresponds to the solutions of a partial differential equation.
 This is explained as a necessary and sufficient condition.

\end{abstract}

\textsl{Key words: C*-algebra, Dirac structure, isotropic,
Hermitian vector bundle, Hilbert C*-module}

\emph{AMS subject class: 46B20, 46C05, 53B35}

\textit{}\\

\section{Introduction}

The idea of a Poisson bracket on the algebra of smooth functions
on a smooth manifold $M$ goes back to Dirac [4]. The underlying
structure of any Hamiltonian system is a Poisson algebra. Courant
and Weinstein [2] presented an approach to unify the geometry of
Hamiltonian vector fields and the Poisson brackets (unification of
Poisson and symplectic geometry). In both of these geometries, the
Poisson algebra is $C^{\infty}(M)$ and the bracket is given by a
specific bivector field on $M$ [4].

As a generalization of Poisson and presymplectic structures, the
theory of Dirac structures on vector spaces and their extension to
manifolds was considered by Courant and Weinstein [1,2]. These are
smooth subbundles of the direct sum bundle $TM \oplus T^*M$ of the
tangent and cotangent bundles, maximally isotropic under the
pairing

$$< (X, \omega) , (Y , \mu)>_+ =: \frac{\omega(Y) + \mu(X)}{2}$$

on $TM \oplus T^*M$.

Dorfman [5] developed the algebraic version of Dirac structures.
The generalization of Dirac structures on real and complex Hilbert
spaces and on Hermitian modules are considered in [7,8].

Our motivation in this paper has been the following consideration:

The integrability of Dirac structures on manifolds was introduced
by Courant [1]. It is important in that it leads to a Poisson
algebra of functions, making it possible to construct the
classical mechanics on the manifold [3,4,5].

The object of this paper is the interpretation of integrable Dirac
structures on pre-Hilbert C*-modules in such a way that we can
specify the moduli space of the group of integrable Dirac
structures on some Hilbert C*-modules.. The paper is organized in
the following manner.

First we give some preliminaries on the basic concepts of Dirac
structures on modules and on $TM$; the tangent bundle of the
smooth manifold $M$ and then introduce the notion of integrable
Dirac structures on modules.  After that we show in details how a
Dirac structure on $TM$ can be constructed out of an orthogonal
transformation of a Hilbert C*-module; the module of sections of a
Hermitian vector bundle on $M$. This enables us to define the
integrability of Dirac structures in terms of the orthogonal
transformations and go through their topological properties. A
necessary and sufficient condition for the integrability of a
Dirac structure is obtained as a solution to some certain partial
differential equation.

\section{Dirac Structures on pre-Hilbert C*-modules}

The concepts in this section are based on the references [6,8].

Let $\mathcal{A}$ be a C*-algebra, $H$ a right
$\mathcal{A}$-module. The action of an element $a \in \mathcal{A}$
on $H$ is denoted by $x.a$ for $x \in H$. $H$ together with a
sesquilinear form $<,>: H \times H \rightarrow \mathcal{A}$ with
the following properties

i)$<x,x> \geq 0 ; \forall x \in H$

ii)$<x,x> = 0$, implies $x = 0.$

iii)${<x,y>}^* = <y,x> ; \forall x,y \in H.$

iv)$<x,y.a> = <x,y>a ; \forall x,y \in H ; \forall a \in
\mathcal{A}.$

is called a {\it pre-Hilbert module}. For $x \in H$ let $\parallel
x\parallel_H =: \parallel <x,x> \parallel   ^{1/2}$. If the normed
space $(H,\parallel - \parallel)$ is complete, then $H$ is called
a {\it Hilbert C*-module}.

In this paper all the Hilbert C*-modules have the property that
for each nonzero $x \in H$, $2x \neq 0$.\\

\begin{example}

Let $M$ be a smooth compact n-manifold and $\pi : E \rightarrow M$
be a Hermitian vector bundle over $M$. Let $\mathcal{A}$ be the
C*-algebra of continuous functions on $M$. Let $H$ be the
$\mathcal{A}$-module of sections of this vector bundle. Then $H$
becomes a pre-Hilbert $\mathcal{A}$-module. In particular when
$\pi : TM \rightarrow M$ is the tangent bundle, the Hermitian
inner product enables us to identify this bundle with its dual
$T^*M$ and $\Gamma(TM)$; the $\mathcal{A}$-module of vector fields
on $M$ is identified with its dual $\Gamma(T^*M)$; the
$\mathcal{A}$-module of first order differential forms on $M$.\\

\end{example}

\begin{definition}

Let $H$ be a pre-Hilbert C*-module. Let $\tau : H \times H
\rightarrow H \times H$ be the flip operator defined by $\tau(x,y)
= (y,x)$ for $x,y \in H$. A submodule $L \subset H \times H$ is
called a {\it Dirac structure on $H$} if $L$ and $\tau (L)$ are
ortho-complementary.\\

\end{definition}

\section{Dirac Structures on Tangent Bundles}

\begin{definition}

[2] Let $M$ be a smooth n-manifold. A {\it Dirac structure on the
tangent bundle $TM$} is a maximally isotropic subbundle $L$ of the
Whitney sum bundle $TM \oplus T^*M$ under the pairing
$$<(X,\omega) , (Y,\mu)>_+ = \frac{1}{2}(\mu(X) + \omega(Y))$$

for $X,Y \in \Gamma(TM)$ and $\omega , \mu \in \Gamma(T^*M)$

\end{definition}

\begin{remark}

Let $\tau : TM \oplus T^*M \rightarrow T^*M \oplus TM $ be the
flip strong bundle isomorphism defined by $\tau (X,\omega) =
(\omega , X)$ for $X \in \Gamma(TM)$ and $\omega \in
\Gamma(T^*M)$. Furthermore let $X=(X_1,...,X_n)$ and
$\omega=(\omega_1,...,\omega_n)$ be respectively the local
coordinate functions of $X , \omega$ in a coordinate system on
$M$. The identification between the tangent and cotangent bundles
explained in example 2.1 shows that to each $X \in \Gamma(TM)$
there corresponds its dual $\omega_X \in \Gamma(T^*M)$ having the
same coordinates as $X$. Also to each $\omega \in \Gamma(T^*M)$
there corresponds an $X_\omega \in \Gamma(TM)$ having the same
coordinate functions as $\omega$.
\end{remark}

With these conventions we have

\begin{proposition}

The subbundle $L \subset TM \oplus T^*M$ is a Dirac structure on
$TM$ if and only if $L$ and $\tau(L)$ are ortho-complementary.

\end{proposition}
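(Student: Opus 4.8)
The plan is to reduce the statement to two ingredients: a pointwise identity that converts isotropy under $\langle\,,\,\rangle_+$ into orthogonality against the flip $\tau(L)$, and a rank count that converts maximality into the complementarity of $L$ and $\tau(L)$. Throughout I would work fiber by fiber over $M$, using the identification of $TM$ with $T^*M$ from Example 2.1 and the coordinate description in Remark 3.2, so that both the symmetric pairing $\langle\,,\,\rangle_+$ and the ambient Hermitian inner product $\langle\,,\,\rangle$ on $TM \oplus T^*M$ are available on the same space, and so that the module-theoretic notion of ortho-complementarity from Definition 2.2 (applied to $H = \Gamma(TM)$) can be checked on fibers.

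First I would establish the key identity relating the two pairings through $\tau$. For sections $(X,\omega)$ and $(Y,\mu)$, writing $\tau(Y,\mu) = (\mu, Y)$ and expanding the Hermitian inner product on $TM \oplus T^*M$ via the duality of Remark 3.2, one obtains
$$\langle (X,\omega),\tau(Y,\mu)\rangle = \langle X, X_\mu\rangle + \langle X_\omega, Y\rangle = \mu(X) + \omega(Y) = 2\,\langle (X,\omega),(Y,\mu)\rangle_+ ,$$
up to the conjugation dictated by the sesquilinearity convention. Consequently $L$ is isotropic for $\langle\,,\,\rangle_+$ if and only if $\langle s, \tau(s')\rangle = 0$ for all $s,s' \in L$, that is, if and only if $L \perp \tau(L)$.

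Second I would treat maximality by a rank argument. The flip $\tau$ is a strong bundle isomorphism and an isometry (it merely interchanges the two orthogonal summands), so $\operatorname{rk}\tau(L) = \operatorname{rk} L$; and the nondegeneracy of $\langle\,,\,\rangle_+$ on each $2n$-dimensional fiber forces any isotropic subbundle to satisfy $\operatorname{rk} L \le n$. If $L$ is maximally isotropic then $\operatorname{rk} L = n$; combined with $L \perp \tau(L)$ and $\operatorname{rk} L + \operatorname{rk}\tau(L) = 2n = \operatorname{rk}(TM\oplus T^*M)$, this yields $TM\oplus T^*M = L \oplus \tau(L)$ with orthogonal summands, i.e. $\tau(L) = L^{\perp}$, which is exactly ortho-complementarity. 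Conversely, if $L$ and $\tau(L)$ are ortho-complementary, then $L \perp \tau(L)$ gives isotropy by the identity above, while the rank identity together with $\operatorname{rk} L = \operatorname{rk}\tau(L)$ forces $\operatorname{rk} L = n$, so no isotropic subbundle can properly contain $L$; hence $L$ is maximally isotropic.

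The step I expect to be the main obstacle is the equivalence between maximality of the isotropic subbundle and its having exactly half the rank of the ambient bundle. Over a manifold this is not pure linear algebra: one must know that a maximal isotropic subbundle has locally constant rank and that the fiberwise maximal isotropics are precisely the $n$-dimensional ones, which uses the signature $(n,n)$ of $\langle\,,\,\rangle_+$ and the nondegeneracy of the Hermitian structure of Example 2.1. Care is also needed in the key identity to keep the sesquilinear conjugation consistent, and it is here that the standing assumption $2x \neq 0$ for $x \neq 0$ enters, guaranteeing that the factor of $2$ relating the two pairings may be cancelled so that vanishing of $\langle s,\tau(s')\rangle$ is genuinely equivalent to vanishing of $\langle s,s'\rangle_+$.
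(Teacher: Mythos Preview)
Your argument is correct and is organized around the explicit identity
\[
\langle (X,\omega),\tau(Y,\mu)\rangle \;=\; \mu(X)+\omega(Y)\;=\;2\,\langle (X,\omega),(Y,\mu)\rangle_+,
\]
which turns isotropy for $\langle\,,\,\rangle_+$ into orthogonality against $\tau(L)$, and then a rank count to pass between maximality and complementarity. The paper's proof does not isolate this identity; instead it first shows $L\cap\tau(L)=0$ by a direct coordinate computation (pairing $(X,\omega)$ with its flip $(\omega,X)$ inside the isotropic $L$ to force all coordinates to vanish), and then simply asserts that maximality yields both $L\perp\tau(L)$ and $L\oplus\tau(L)=TM\oplus T^*M$, with the converse declared ``obvious.'' Your route is more systematic: the identity handles both directions of the isotropy/orthogonality equivalence at once, and the half-rank characterization of maximal isotropics makes the converse just as transparent as the forward direction. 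The paper's route is more hands-on and avoids invoking the signature-$(n,n)$ fact about maximal isotropics, at the cost of leaving the orthogonality and the converse essentially unjustified; your approach pays for that fact up front but gets a cleaner and complete argument in return.
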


\begin{proof}

Suppose $L$ is a Dirac structure on $TM$. For
$(X=(X_i)_{i},\omega=(\omega_i)_i) \in L \cap \tau(L)$, we have
$((X_1,...,X_n),(\omega_1,...,\omega_n)),
((\omega_1,...,\omega_n), (X_1,...,X_n)) \in L$ and since $L$ is
isotropic, this implies that for each $i=1,...,n$,
$X_i=\omega_i=0$. So $L \cap \tau(L) = 0$. Also since $L$ is
maximally isotropic, $L$ and $\tau(L)$ are orthogonal and $L
\oplus \tau(L) = TM \oplus T^*M$.

Conversely if $L$ and $\tau(L)$ are ortho-complementary, then
obviously $L$ is maximally isotropic with respect to the pairing
$<,>_+$.

\end{proof}

\begin{remark}
Let $P_1,P_2$ be respectively the first and second projections on
$TM \oplus T^*M$. Let $L \subset TM \oplus T^*M$ be a Dirac
structure on $TM$. since $L$ is a Dirac structure, then for
$(X,\omega),(Y,\beta) \in L$,
$$<P_1(X,\omega),P_2(Y,\beta)> +
<P_2(X,\omega),P_1(Y,\beta)> = 0$$ In particular this is true for
the basis elements of $L$, so it implies $P_1P_2^* + P_2P_1^* =
0$.

\end{remark}

\begin{proposition}

If $L \subset TM \oplus T^*M$ is a Dirac structure on $TM$, then
the restriction of $P_1+P_2$ and $P_1-P_2$ to $L$ are strong
bundle isomorphisms.
\end{proposition}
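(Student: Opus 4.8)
The plan is to show that each of the restrictions $(P_1+P_2)|_L$ and $(P_1-P_2)|_L$ is a fiberwise linear isomorphism onto $TM$ (identified with $T^*M$ as in Example 2.1 and Remark 3.2), and then to note that a smooth bundle morphism covering $\mathrm{id}_M$ which is a fiberwise isomorphism is precisely a strong bundle isomorphism. First I would pin down the ranks. By Proposition 3.3 the subbundles $L$ and $\tau(L)$ are ortho-complementary, so $L\oplus\tau(L)=TM\oplus T^*M$; since $\tau$ is a strong bundle isomorphism, $\operatorname{rank}\tau(L)=\operatorname{rank}L$, whence $2\operatorname{rank}L=2n$ and $\operatorname{rank}L=n$, the same rank as $TM$. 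Because source and target have equal rank over the common base $M$, it then suffices to prove that the restricted maps are injective on each fiber: a fiberwise-injective smooth bundle morphism between bundles of equal rank is automatically a fiberwise isomorphism, and Cramer's rule makes the inverse smooth, so it is a strong bundle isomorphism.

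Next I would establish fiberwise injectivity using isotropy together with positivity of the Hermitian metric. Fix a point of $M$ and suppose $(X,\omega)\in L$ lies in the kernel of $(P_1+P_2)|_L$; under the identification of Remark 3.2 this reads $X+X_\omega=0$, i.e. $X_\omega=-X$. Evaluating the pairing of Definition 3.1 on the diagonal and using that $L$ is isotropic gives
$$\langle (X,\omega),(X,\omega)\rangle_+ = \tfrac12\bigl(\omega(X)+\omega(X)\bigr) = \omega(X) = 0.$$
Rewriting $\omega(X)$ through the metric identification, $\omega(X)=\langle X_\omega,X\rangle=\langle -X,X\rangle=-\langle X,X\rangle$, so $\langle X,X\rangle=0$, and property (ii) of the pre-Hilbert module forces $X=0$, hence $\omega=\omega_X=0$. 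Thus the kernel is trivial. The argument for $(P_1-P_2)|_L$ is identical except that the kernel condition becomes $X_\omega=X$, giving $\omega(X)=\langle X,X\rangle=0$ and again $X=\omega=0$.

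Assembling these fiberwise statements yields the claim: the bundle morphisms $P_1\pm P_2$ cover $\mathrm{id}_M$ and are fiberwise isomorphisms between the rank-$n$ bundles $L$ and $TM$, hence are strong bundle isomorphisms. I expect the main obstacle to be bookkeeping around the identification $TM\cong T^*M$ rather than anything deep. One must check that $P_1\pm P_2$ is genuinely well defined as a map into a single bundle (this is exactly what the Hermitian structure of Example 2.1 supplies), and that the self-pairing step is legitimate over $\mathcal{A}$: the reduction $\langle -X,X\rangle=-\langle X,X\rangle$ relies on $-1$ being a central self-adjoint element of $\mathcal{A}$, so that the kernel condition turns $\omega(X)$ into the genuinely positive quantity $-\langle X,X\rangle$ and not some indefinite bilinear expression. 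Once these consistency checks are in place, the whole statement reduces to isotropy plus positivity of the metric.
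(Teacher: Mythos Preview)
Your proof is correct and takes a genuinely different route from the paper's. For injectivity, the paper argues that if $(P_1+P_2)(X,\omega)=0$ then in local coordinates $X_i=-\omega_i$, which forces $\tau(X,\omega)=-(X,\omega)\in L$ and hence $(X,\omega)\in L\cap\tau(L)=\{0\}$; you instead insert the kernel element into the self-pairing $\langle(X,\omega),(X,\omega)\rangle_+$ and use isotropy together with positive-definiteness of the Hermitian metric to force $X=0$. For surjectivity, the paper writes an arbitrary target $f$ as $(X,\omega)$ with matching coordinates, decomposes it explicitly along $L\oplus\tau(L)$, and chases coordinates through the relation $L\cap\tau(L)=\{0\}$ to exhibit a preimage in $L$; you bypass this entirely by noting $\operatorname{rank}L=n$ from the complementary decomposition and invoking the linear-algebra fact that a fiberwise injection between bundles of equal rank over the same base is automatically a fiberwise isomorphism with smooth inverse. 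Your argument is cleaner and coordinate-free, and it isolates exactly which structural ingredients do the work (isotropy plus positivity for the kernel, the ortho-complementary splitting only for the rank count); the paper's argument is more hands-on and produces an explicit preimage, which can be convenient if one later needs a formula for $(P_1\pm P_2)^{-1}$.
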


\begin{proof}

For $X \in \Gamma(TM)$ and $\omega \in \Gamma(T^*M)$ with local
coordinates $X=(X_i)_i, \omega=(\omega_i)_i$, $i=1,...,n$, if
$(P_1+P_2)(X,\omega)=0$, $X_i=-\omega_i$ for each $i=1,...,n$. So
$(X,\omega) \in L \cap \tau(L)=0$, since $L$ is a Dirac structure.
Thus $P_1+P_2$ is injective. The same argument shows that
$P_1-P_2$ is injective.

Let us identify $TM$ with $T^*M$ via the Hermitian inner product.
let $\eta$ be the trivial Hermitian vector bundle of rank n over
$M$, $\mathcal{A}$ the C*-algebra of continuous functions on $M$
and $H$ the Hilbert $\mathcal{A}$-module of sections of $\eta$.
For each $f=(f_1,...,f_n) \in H$, let $X \in \Gamma(TM)$ and
$\omega \in \Gamma(T^*M)$ both have $f$ as their local coordinate
functions. Since $L$ and $\tau(L)$ are ortho-complementary, there
are $(Y,\beta) \in L$ and $(Z,\mu) \in \tau(L)$ with local
coordinates $Y=(Y_i)_i$, $Z=(Z_i)_i$, $\beta=(\beta_i)_i$ and
$\omega=(\omega_i)_i$, such that
$$(X,\omega) = (Y,\beta) \oplus (Z,\mu)$$
Thus $f_i=Y_i+Z_i=\beta_i+\mu_i$ and $Y_i-\mu_i=\beta_i-Z_i$ for
all i. So $((Y_i-\mu_i)_i , (Y_i-\mu_i)_i) = ((Y_i-\mu_i)_i ,
(\beta_i-Z_i)_i) = (Y_i,\beta_i)_i - (\mu_i,Z_i)_i \in L \cap
\tau(L) = 0$. Then $Y_i = \mu_i , \beta_i = Z_i$ for all i. And so
$X=(f_1,...,f_n)=(P_1+P_2)(Y,\beta)$, means that $P_1+P_2$ is
surjective. In the same way we can see that $P_1-P_2$ is
surjective.
\end{proof}

\begin{remark}

With the notations of the previous proposition, if $Aut(TM)$ be
the group of strong bundle automorphisms of the bundle $TM$, then
$A=(P_1+P_2)(P_1-P_2)^{-1} \in Aut(TM)$ (after the identification
of $TM$ with $T^*M$. Also by the restriction of $P_1 , P_2$ on the
sections, we can interpret $A \in Aut(\Gamma(TM))$.

\end{remark}

\begin{lemma}

With the notations of the previous remark, $A \in Aut(\Gamma(TM))$
is orthogonal.
\end{lemma}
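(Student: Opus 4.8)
The plan is to verify orthogonality directly from the definition, i.e.\ to show that $\langle Af, Ag\rangle = \langle f, g\rangle$ for all sections $f, g \in \Gamma(TM)$, where $\langle\cdot,\cdot\rangle$ denotes the $\mathcal{A}$-valued Hermitian inner product after the identification of $TM$ with $T^*M$. The two facts I would lean on are the ortho-complementarity of $L$ and $\tau(L)$ (in the isotropy form recorded in Remark 3.5) and Proposition 3.6, which guarantees that $P_1-P_2$ and $P_1+P_2$ restrict to strong bundle isomorphisms on $L$, so that $A=(P_1+P_2)(P_1-P_2)^{-1}$ is well defined as an automorphism.

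First I would parametrize the sections through $L$. Since $S:=(P_1-P_2)|_L$ is an isomorphism, every $f\in\Gamma(TM)$ is uniquely of the form $f=S(Y,\beta)=Y-\beta$ for some $(Y,\beta)\in L$, where I use the identification to view both $Y$ and $\beta$ as elements of $H$. By the definition of $A$ this gives $Af=(P_1+P_2)(Y,\beta)=Y+\beta$. Writing likewise $g=Y'-\beta'$ with $(Y',\beta')\in L$, we get $Ag=Y'+\beta'$.

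Next I would expand the two inner products and subtract. Using sesquilinearity,
\begin{align*}
\langle Af, Ag\rangle - \langle f, g\rangle
&= \langle Y+\beta,\, Y'+\beta'\rangle - \langle Y-\beta,\, Y'-\beta'\rangle\\
&= 2\bigl(\langle Y, \beta'\rangle + \langle\beta, Y'\rangle\bigr).
\end{align*}
Because $(Y,\beta)$ and $(Y',\beta')$ both lie in $L$, the isotropy relation of Remark 3.5 applied to this pair reads $\langle P_1(Y,\beta),P_2(Y',\beta')\rangle+\langle P_2(Y,\beta),P_1(Y',\beta')\rangle=\langle Y,\beta'\rangle+\langle\beta,Y'\rangle=0$. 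Hence $\langle Af, Ag\rangle=\langle f, g\rangle$, and since $A$ is already a module automorphism it is orthogonal.

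The step I expect to require the most care is not the algebra but the bookkeeping of the identification $TM\cong T^*M$: one must check that under this identification the pairing $\langle\cdot,\cdot\rangle_+$ of Definition 3.3 genuinely reproduces the cross terms $\langle Y,\beta'\rangle+\langle\beta,Y'\rangle$ in the $\mathcal{A}$-valued inner product, and that sesquilinearity and the associated adjoints are handled consistently over the C*-algebra $\mathcal{A}$ (the standing hypothesis $2x\neq 0$ being what lets one pass freely between the quadratic and the polarized forms of isotropy). Once this dictionary is pinned down, the computation above closes the argument immediately.
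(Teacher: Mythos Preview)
Your argument is correct and rests on the same key input as the paper (the isotropy relation from what you call Remark~3.5, which in the paper is Remark~3.4), but the execution differs. The paper works purely at the operator level: it writes $AA^*=(P_1+P_2)(P_1-P_2)^{-1}(P_1^*-P_2^*)^{-1}(P_1^*+P_2^*)$, expands the inner factor $((P_1^*-P_2^*)(P_1-P_2))^{-1}$, and uses $P_1^*P_2+P_2^*P_1=0$ to flip the signs of the cross terms so that the whole expression collapses to the identity. You instead parametrize sections through $L$ via $(P_1-P_2)^{-1}$ and verify $\langle Af,Ag\rangle=\langle f,g\rangle$ elementwise, which is really $A^*A=I$ rather than $AA^*=I$; these are equivalent here because $A$ is already known to be a bijective module map. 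Your route makes the role of the isotropy of $L$ completely transparent and avoids any question about what the adjoints of the restricted projections mean, at the modest cost of the bookkeeping you flag concerning the identification $TM\cong T^*M$. (Incidentally, your internal cross-references are off by one relative to the paper: the isotropy remark is~3.4, the isomorphism result is Proposition~3.5, and the pairing is Definition~3.1.)
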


\begin{proof}

$$AA^* = (P_1+P_2)(P_1-P_2)^{-1}(P_1^*-P_2^*)^{-1}(P_1^*+P_2^*)$$
$$=(P_1+P_2)((P_1^*-P_2^*)(P_1-P_2))^{-1}(P_1^*+P_2^*)$$
$$=(P_1+P_2)(P_1^*P_1-P_1^*P_2-P_2^*P_1+P_2^*P_2)^{-1}(P_1^*+P_2^*)$$
$$=(P_1+P_2)(P_1^*P_1+P_1^*P_2+P_2^*P_1+P_2^*P_2)^{-1}(P_1^*+P_2^*)$$
$$=(P_1+P_2)((P_1^*+P_2^*)(P_1+P_2))^{-1}(P_1^*+P_2^*) = I$$

Where we have used the fact in remark 3.4 that
$P_1^*P_2+P_2^*P_1=0$.

\end{proof}

\begin{proposition}

With the notations of the remark 3.2, let $B \in Aut(\Gamma(TM))$
be orthogonal, then
$$L_B = \{((I+B)X , (I-B)\omega_X) ; X \in \Gamma(TM)\}$$
is a Dirac structure on $TM$.

\end{proposition}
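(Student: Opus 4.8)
The plan is to invoke the ortho-complementarity criterion of Proposition 3.3 in its ``if'' direction: to conclude that $L_B$ is a Dirac structure on $TM$ it suffices to verify that $L_B$ and $\tau(L_B)$ are ortho-complementary, i.e. that (a) $L_B \perp \tau(L_B)$, (b) $L_B \cap \tau(L_B) = 0$, and (c) $L_B + \tau(L_B) = TM \oplus T^*M$. Throughout I would use the identification of Example 2.1, under which $\omega_X$ is the vector field with the same coordinates as $X$, so that $(I-B)\omega_X$ corresponds to $(I-B)X$ and $\tau(L_B) = \{((I-B)\omega_Y,(I+B)Y) : Y \in \Gamma(TM)\}$; I would also use repeatedly that $B$ orthogonal means $B^*B = BB^* = I$, hence $\langle BX, BY\rangle = \langle X, Y\rangle$ and $B^{-1} = B^*$.

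For (a) I would pair a generator $((I+B)X,(I-B)\omega_X)$ of $L_B$ with a generator $((I-B)\omega_Y,(I+B)Y)$ of $\tau(L_B)$ in the inner product of $TM \oplus T^*M$, obtaining $\langle (I+B)X,(I-B)Y\rangle + \langle (I-B)X,(I+B)Y\rangle$. Expanding each summand, the two diagonal contributions $\langle X,Y\rangle$ and $-\langle BX,BY\rangle$ cancel because $\langle BX,BY\rangle = \langle X,Y\rangle$, leaving only the cross terms; the surviving cross terms of the two summands are exact negatives of one another, so the whole expression vanishes and $L_B \perp \tau(L_B)$.

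For (b), an element of $L_B \cap \tau(L_B)$ produces $X,Y$ with $(I+B)X = (I-B)Y$ and $(I-B)X = (I+B)Y$; adding and subtracting these gives $X = Y$ together with $BX = -BY$, whence $X = -Y$ by injectivity of $B$, so that $2X = 0$ and therefore $X = 0$ by the standing hypothesis that $2x \neq 0$ for $x \neq 0$. For (c), given $(V,W) \in TM \oplus T^*M$ I would solve $(I+B)X + (I-B)Y = V$ and $(I-B)X + (I+B)Y = W$ by adding and subtracting them, which yields $X + Y = \tfrac12(V+W)$ and $X - Y = \tfrac12 B^{-1}(V-W)$; since $B^{-1} = B^*$ exists and $2$ is invertible in $\mathcal{A}$, these determine $X$ and $Y$ and exhibit the desired decomposition.

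The step I expect to be the main obstacle is the spanning verification (c), together with the structural facts it presupposes: that $B$ is genuinely invertible with $B^{-1} = B^*$ (which orthogonality supplies) and that one may divide by $2$, and moreover that $L_B$ is a bona fide submodule/subbundle --- the image of the adjointable map $X \mapsto ((I+B)X,(I-B)\omega_X)$ --- rather than merely a subset. Parts (a) and (b) are essentially formal once $B^*B = I$ is in hand; the one place that genuinely demands care is keeping the identification $TM \cong T^*M$ and the convention for the action of a covector consistent, so that the cross terms in (a) cancel cleanly instead of combining into a nonzero skew-adjoint remainder.
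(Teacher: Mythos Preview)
Your argument is correct, and it is genuinely different from the paper's. The paper works directly with Definition~3.1: it first checks that $L_B$ is isotropic under the symmetric pairing $\langle\,\cdot\,,\,\cdot\,\rangle_+$ by expanding $\langle (I-B)\omega_Y,(I+B)X\rangle + \langle (I-B)\omega_X,(I+B)Y\rangle$ and using orthogonality of $B$, and then proves maximality by taking an arbitrary $(Z,\alpha)$ with $L_B\cup\{(Z,\alpha)\}$ isotropic and solving for $(Z,\alpha)$ in the form $((I+B)\cdot,(I-B)\cdot)$. You instead invoke the converse direction of Proposition~3.3 and verify ortho-complementarity of $L_B$ and $\tau(L_B)$ with respect to the direct-sum Hermitian form. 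Your route trades the paper's maximality computation (which has to manipulate the pairing equation until $B(\alpha+\omega_Z)=\omega_Z-\alpha$ falls out) for a short linear-algebra spanning argument using $B^{-1}=B^*$; the price is that you rely on the converse in Proposition~3.3, which the paper states but dismisses as ``obvious''. Either approach ultimately uses the same ingredients --- orthogonality of $B$ and the $TM\cong T^*M$ identification --- and both are of comparable length, but your version makes the role of invertibility of $B$ and of the scalar $2$ more transparent.
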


\begin{proof}

Let $((I+B)X,(I-B)\omega_X),((I+B)Y,(I-B)\omega_Y) \in L_B$. Then

from example 2.1 and remark 3.2, we have the following equations

$$<\omega_Y,BX> = <B\omega_X,Y> , <\omega_X,BY> = <B\omega_Y,X>$$
and also since $B$ is orthogonal,
$$<\omega_Y,X> = <B\omega_Y,BX> , <\omega_X,Y> = <B\omega_X,BY>$$

These equations imply
$$<(I-B)\omega_Y , (I+B)X> + ((I-B)\omega_X , (I+B)Y> = 0$$
and so $L_B$ is isotropic.

Now if $(Z,\alpha) \in \Gamma(TM) \oplus \Gamma(T^*M)$ be such
that $L_B \cup \{(Z, \alpha)\}$ is isotropic, then for each
$((I+B)X,(I-B)\omega_X) \in L_B$, we have
$$0 = <((I+B)X ,(I-B)\omega_X) , (Z , \alpha)>_+ $$
$$= <\alpha , (I+B)X> + <(I-B)\omega_X , Z>$$
$$=<\alpha , X> + <\alpha , BX> + <\omega_X , Z> - <B\omega_X ,
Z>$$
$$=<\alpha , X> + <\alpha , BX> - <\omega_Z , BX> + <\omega_Z,
X>$$
$$=<\alpha + \omega_Z , X> + <\alpha - \omega_Z , BX> $$
And so $<B(\alpha + \omega_Z) , BX> + <\alpha - \omega_Z , BX> =
0$. Thus $B(\alpha + \omega_Z) = \omega_Z - \alpha$. In the same
way $B(Z + Z_\alpha) = Z - Z_\alpha.$

where $Z_\alpha \in \Gamma(TM), \omega_Z \in \Gamma(T^*M)$ are
respectively the corresponded duals to $\alpha$ and $Z$ as in
remark 3.2.

So $Z = \frac{1}{2}(I+B)(Z + Z_\alpha)$ and $\alpha =
\frac{1}{2}(I-B)(\alpha + \omega_Z)$. Thus $(Z,\alpha) \in L_B$
and $L_B$ is maximal.

\end{proof}

\begin{proposition}

Any Dirac structure $L \subset TM \oplus T^*M$ on $TM$ is of the
form $L_B$ for some $B \in Aut(\Gamma(TM))$.

\end{proposition}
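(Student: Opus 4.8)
The plan is to show that every Dirac structure $L$ arises from the construction in Proposition 3.7 by producing the orthogonal automorphism $B$ explicitly from $L$ itself. The natural candidate is the operator $A = (P_1+P_2)(P_1-P_2)^{-1}$ built in Remark 3.6, which Lemma 3.8 already tells us is an orthogonal element of $\mathrm{Aut}(\Gamma(TM))$. So the whole statement reduces to the claim that $L = L_A$ for this particular $A$. I would set $B = A$ and verify the two inclusions $L \subseteq L_B$ and $L_B \subseteq L$; since both are Dirac structures (the first by hypothesis, the second by Proposition 3.7), and Dirac structures are maximally isotropic, a single inclusion together with maximality would already force equality, so in practice I only need one containment.

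First I would take an arbitrary $(X,\omega) \in L$ and write $Y = (P_1 - P_2)(X,\omega) = X - \omega$ and note $(P_1+P_2)(X,\omega) = X + \omega = A Y$ by definition of $A$ (using the identification of $TM$ with $T^*M$ so that these sums make sense coordinatewise). The goal is to recognize $(X,\omega)$ as the image of $Y$ under the parametrization defining $L_A$, i.e. to show $X = \tfrac{1}{2}(I+A)Y$ and $\omega$ corresponds to $\tfrac{1}{2}(I-A)\omega_Y$ in the appropriate dual picture. Concretely, from $X+\omega = AY$ and $X - \omega = Y$ I can solve $X = \tfrac12(Y + AY) = \tfrac12(I+A)Y$ and $\omega = \tfrac12(AY - Y) = \tfrac12(A - I)Y$. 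Matching this against the defining form $\{((I+B)Z,(I-B)\omega_Z)\}$ of Proposition 3.7 (after rescaling $Z$ by the factor $\tfrac12$, which is harmless since we are told $2x \neq 0$ and the module is over a field-like C*-algebra allowing this scaling) exhibits $(X,\omega) \in L_A$, giving $L \subseteq L_A$.

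The step I expect to be the main obstacle is the careful bookkeeping between the tangent and cotangent components under the identification from Example 2.1 and Remark 3.2: $A$ is defined as an automorphism of $\Gamma(TM)$, whereas the defining relation of $L_A$ applies $(I+A)$ to the vector slot and $(I-A)$ to the form slot via the dual correspondence $\omega_X \leftrightarrow X$. I must check that the operator $A$ obtained from $P_1 \pm P_2$ acts consistently on both slots under this identification, so that the element I produce genuinely lands in $L_A$ and not merely in a superficially similar set. A secondary technical point is the legitimacy of the scalar $\tfrac12$ and the invertibility of $(P_1 - P_2)|_L$, but the latter is exactly Proposition 3.5, and the former is guaranteed by the standing hypothesis that $2x \neq 0$ for nonzero $x$. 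Once these identifications are pinned down, the inclusion $L \subseteq L_A$ follows, and maximal isotropy of both $L$ and $L_A$ upgrades it to the equality $L = L_A$, completing the proof.
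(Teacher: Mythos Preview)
Your approach is exactly the paper's: construct $A=(P_1+P_2)(P_1-P_2)^{-1}$ from Remark~3.6, invoke its orthogonality (Lemma~3.7 --- your numbering is off by one: the construction of $L_B$ is Proposition~3.8, not~3.7), and identify $L$ with the corresponding $L_B$. You actually supply more detail than the paper, which simply asserts that ``to this $A$ there corresponds a $B$'' without writing out the inclusion. One slip to fix: from $X+\omega = AY$ and $X-\omega = Y$ you correctly obtain $X=\tfrac12(I+A)Y$ and $\omega=\tfrac12(A-I)Y$, but the latter is $-\tfrac12(I-A)Y$, so matching against $L_B=\{((I+B)Z,(I-B)\omega_Z)\}$ does \emph{not} give $B=A$. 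Indeed, on $L_B$ one has $P_1+P_2=2Z$ and $P_1-P_2=2BZ$, so the operator $A$ built from $L_B$ is $B^{-1}$; hence for a given $L$ the correct identification is $L=L_{A^{-1}}$. Since $A^{-1}$ is still an orthogonal automorphism this does not affect the conclusion, and your maximality argument for upgrading the single inclusion to equality is sound.
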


\begin{proof}

We  have seen that if $L$ is a Dirac structure on $TM$, then the
restrictions of $P_1+P_2, P_1-P_2$ to $L$ are isomorphisms and so
$A=(P_1+P_2)(P_1-P_2)^{-1} \in Aut(TM)$ is orthogonal. Now to this
$A$ there corresponds a $B \in Aut(\Gamma(TM))$ which is
orthogonal and so $L$ is the Dirac structure $L_B$ corresponded to
$B$.

\end{proof}

\section{The Topology of Integrable Dirac Structures}

\begin{definition}

[1] Let $L \subset TM \oplus T^*M$ be a Dirac structure on $TM$.
Then $L$ is said to be {\it integrable} if for each $(X , \omega
),(Y , \mu) \in L$, we have
$$([X,Y] , \{\omega , \mu\}) \in L$$
where $\{\omega , \mu\} = X(d \mu) - Y(d \omega) +
\frac{1}{2}d(X(\mu)-Y(\omega))$.

\end{definition}

When $L$ is a Dirac structure on $TM$, in proposition 3.9 we have
shown that, $L$ is of the form $L_B$ for some orthogonal $B \in
Aut(\Gamma(TM))$. We have the following definition

\begin{definition}

For orthogonal $B \in Aut(\Gamma(TM))$, the Dirac structure $L_B$
is {\it integrable} if for each pair
$((I+B)X,(I-B)\omega_X),((I+B)Y,(I-B)\omega_Y) \in L_B$, we have
$$(I+B)\{(I-B)\omega_X,(I-B)\omega_Y\} = (I-B)[(I+B)X,(I+B)Y]$$

$B \in Aut(\Gamma(TM))$ is called {\it integrable automorphism} if
$L_B$ is an integrable Dirac structure.

\end{definition}

By a straight forward calculation we can see
\begin{lemma}
The above two definitions for the integrability of Dirac
structures are equivalent.
\end{lemma}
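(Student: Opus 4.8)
The plan is to fix an orthogonal $B \in Aut(\Gamma(TM))$ and write the Dirac structure as $L = L_B$, so that the two statements become: ``$L_B$ is integrable in the sense of Definition 4.1'' versus ``$B$ is an integrable automorphism in the sense of Definition 4.2''. The whole argument then rests on a single clean membership criterion, which I would isolate and prove first. For $Z \in \Gamma(TM)$ and a one-form $\alpha$ with associated vector field $X_\alpha$ (the dual under the identification of Example 2.1 and Remark 3.2), I claim that
$$(Z,\alpha) \in L_B \quad\Longleftrightarrow\quad (I+B)X_\alpha = (I-B)Z .$$

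To prove the criterion, suppose $(Z,\alpha) \in L_B$, so $Z = (I+B)W$ and $\alpha = (I-B)\omega_W$ for some $W \in \Gamma(TM)$. Since $B$ acts identically on $\Gamma(TM)$ and $\Gamma(T^*M)$ under the identification, the second equation transports to $X_\alpha = (I-B)W$. Adding $Z = (I+B)W$ and $X_\alpha = (I-B)W$ gives $W = \frac{1}{2}(Z + X_\alpha)$, and substituting back produces $B(Z + X_\alpha) = Z - X_\alpha$, which by linearity of $B$ rearranges to $(I+B)X_\alpha = (I-B)Z$. Conversely, given $(I+B)X_\alpha = (I-B)Z$, I would set $W = \frac{1}{2}(Z + X_\alpha)$ and verify directly that $Z = (I+B)W$ and $X_\alpha = (I-B)W$, so $(Z,\alpha) \in L_B$. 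This route deliberately avoids inverting $I+B$, which is the right thing to do since orthogonality of $B$ does not make $I+B$ invertible (already $B = -I$ is orthogonal with $I+B = 0$); it is exactly the computation used at the end of the proof of Proposition 3.7.

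With the criterion established, the equivalence is essentially automatic. A generic pair in $L_B$ is $((I+B)X,(I-B)\omega_X)$ and $((I+B)Y,(I-B)\omega_Y)$, so applying Definition 4.1 means taking $Z = [(I+B)X,(I+B)Y]$ and $\alpha = \{(I-B)\omega_X,(I-B)\omega_Y\}$ and asking whether $(Z,\alpha) \in L_B$. Feeding this pair into the membership criterion, the condition $([X',Y'],\{\omega',\mu'\}) \in L_B$ turns into
$$(I+B)\{(I-B)\omega_X,(I-B)\omega_Y\} = (I-B)[(I+B)X,(I+B)Y],$$
which is precisely the defining identity of Definition 4.2. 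As this must hold for every $X,Y$ in one formulation exactly when it holds for the corresponding pairs in the other, the two notions of integrability coincide.

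The step demanding the most care is the bookkeeping of the identification between $\Gamma(TM)$ and $\Gamma(T^*M)$: the Courant bracket $\{\cdot,\cdot\}$ outputs a one-form while the Lie bracket $[\cdot,\cdot]$ outputs a vector field, so before comparing I must consistently transport both to the same module via Remark 3.2 and keep straight on which factor $(I+B)$ acts and on which $(I-B)$ acts. Once that identification is pinned down, no genuine analytic difficulty remains and everything reduces to the linear-algebraic manipulation of the membership criterion, which is the ``straightforward calculation'' the statement alludes to.
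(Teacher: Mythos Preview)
Your argument is correct and is precisely the ``straightforward calculation'' the paper alludes to but never writes out: the paper gives no proof beyond that one sentence, so there is nothing substantive to compare against. Your isolation of the membership criterion $(Z,\alpha)\in L_B \iff (I+B)X_\alpha=(I-B)Z$, together with your care in avoiding the inversion of $I+B$ and in tracking the $\Gamma(TM)\cong\Gamma(T^*M)$ identification, is exactly the natural route and mirrors the computation at the end of Proposition~3.8.
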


\begin{corollary}
For nonzero real number $r$, $L_{\pm rI}$ are integrable only if
$r = \pm 1$.

\end{corollary}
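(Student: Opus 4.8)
The plan is to substitute $B=\pm rI$ directly into the integrability identity of Definition 4.2 and to exploit that $I\pm rI=(1\pm r)I$ act as scalar multiples of the identity on both $\Gamma(TM)$ and $\Gamma(T^*M)$. For $B=rI$ a typical element of $L_B$ has the form $((1+r)X,(1-r)\omega_X)$, so the defining condition becomes
$$(1+r)\,\{(1-r)\omega_X,(1-r)\omega_Y\}=(1-r)\,[(1+r)X,(1+r)Y].$$
The idea is to strip the constant scalars off both brackets and read off the resulting constraint on $r$.

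First I would record the scaling behaviour of the two brackets under multiplication by constants. Since $1\pm r$ are constants, the Lie bracket is homogeneous of degree two, $[(1+r)X,(1+r)Y]=(1+r)^2[X,Y]$, while each term of the form bracket in Definition 4.1 carries exactly one vector-field factor and one form factor; hence $\{(1-r)\omega_X,(1-r)\omega_Y\}$, computed with the associated vector fields $(1+r)X,(1+r)Y$, equals $(1+r)(1-r)\,\{\omega_X,\omega_Y\}$. Substituting these, both sides acquire the common factor $(1+r)^2(1-r)$ and the condition collapses to
$$(1+r)^2(1-r)\,\{\omega_X,\omega_Y\}=(1+r)^2(1-r)\,[X,Y],$$
that is, $(1+r)^2(1-r)\big(\{\omega_X,\omega_Y\}-\omega_{[X,Y]}\big)=0$ after the identification of Remark 3.2.

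The decisive step is then to show that the form bracket of the duals does not in general agree with the dual of the Lie bracket, which forces the factor $(1+r)^2(1-r)$ to vanish. For this I would exhibit a single separating pair: taking $X=\partial_1$ and $Y=x^1\partial_1$ in a chart, one has $[X,Y]=\partial_1$, so $\omega_{[X,Y]}=dx^1\neq 0$, whereas $d\omega_X=d\omega_Y=0$ and $X(\omega_Y)=Y(\omega_X)=x^1$ give $\{\omega_X,\omega_Y\}=0$. Thus $\{\omega_X,\omega_Y\}-\omega_{[X,Y]}\neq 0$, and since the integrability identity must hold for every admissible pair, we conclude $(1+r)^2(1-r)=0$, i.e. $r=\pm 1$.

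Finally I would dispose of $B=-rI$ by the same computation, in which the roles of $1+r$ and $1-r$ are interchanged and the common factor becomes $(1-r)^2(1+r)$, again vanishing only for $r=\pm 1$. I expect the main obstacle to lie in the decisive step, namely confirming against the explicit bracket of Definition 4.1 that $\{\omega_X,\omega_Y\}$ and $\omega_{[X,Y]}$ genuinely differ; once one concrete separating pair is in hand, the remainder is elementary scalar algebra.
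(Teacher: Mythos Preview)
Your computation is correct, but you are working much harder than the paper does, and you are in fact bypassing the point the paper is making.

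The paper's argument is a single observation: by Definition~4.2 (and, behind it, Proposition~3.8), the notion ``$L_B$ is integrable'' is only defined for \emph{orthogonal} $B\in Aut(\Gamma(TM))$; equivalently, $L_B$ is a Dirac structure in the first place only when $B$ is orthogonal. For $B=\pm rI$ this forces the (real) eigenvalue $\pm r$ to lie in $\{\pm 1\}$, i.e.\ $r=\pm 1$. No bracket computation is needed.

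Your route instead substitutes $B=\pm rI$ into the bracket identity of Definition~4.2 and extracts the scalar factor $(1\pm r)^2(1\mp r)$, then produces a separating pair $X=\partial_1$, $Y=x^1\partial_1$ to show the residual expression $\{\omega_X,\omega_Y\}-\omega_{[X,Y]}$ is not identically zero. The scaling and the example are both fine, so you do recover $r=\pm1$. What this buys you is an argument that the \emph{closure condition} already fails for $r\neq\pm1$, independently of whether one has first checked that $L_{rI}$ is isotropic. What it costs you is that you are applying Definition~4.2 outside its stated hypothesis (orthogonal $B$), so strictly speaking you must either note that the algebraic identity still makes sense formally, or---cleaner---simply observe, as the paper does, that orthogonality is a prerequisite and already pins down $r$.
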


\begin{proof}
Since for orthogonal $B \in Aut(\Gamma(TM))$ the eigenvalues of
$B$ are only $\pm 1$, so $\pm rI \in Aut(\Gamma(TM))$ for $r \neq
1$ are not integrable.
\end{proof}

Now let $\mathbb{R}$ be the field of real numbers, $\mathbb{R}^2$
the Euclidean space with the two coordinate functions $x,y$, $R$
the $\mathbb{R}$-ring of degree two polynomials in $x,y$ and $M=
\Gamma(T\mathbb{R}^2)$ the $R$-module of vector fields on
$\mathbb{R}^2$.
\\

\begin{proposition}

 $Aut(M)$ is in one to one correspondence with $GL_2(\mathbb{R})
\times \mathbb{R}^8$.

\end{proposition}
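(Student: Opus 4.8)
The plan is to identify $Aut(M)$ with the matrix group $GL_2(R)$ and then exploit the fact that $R$ is a local ring with nilpotent maximal ideal. First I would observe that $M=\Gamma(T\mathbb{R}^2)$ is the free $R$-module $R\partial_x\oplus R\partial_y$ of rank two, so that an $R$-linear automorphism of $M$ is precisely an invertible $2\times 2$ matrix with entries in $R$, with composition corresponding to matrix multiplication; hence $Aut(M)\cong GL_2(R)$ as groups and it suffices to put $GL_2(R)$ in one-to-one correspondence with $GL_2(\mathbb{R})\times\mathbb{R}^8$. The relevant structure of $R$ is that it is a local $\mathbb{R}$-algebra with $\mathbb{R}$-basis $\{1,x,y\}$ whose maximal ideal $\mathfrak{m}=(x,y)$ satisfies $\mathfrak{m}^2=0$ and has $\dim_{\mathbb{R}}\mathfrak{m}=2$, and whose residue field $R/\mathfrak{m}$ is $\mathbb{R}$.

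Next I would write every $A\in M_2(R)$ uniquely as $A=A_0+A_1x+A_2y$ with $A_0,A_1,A_2\in M_2(\mathbb{R})$, obtained by expanding each entry of $A$ in the basis $\{1,x,y\}$ of $R$. The key step, which I expect to be the only real content of the argument, is the invertibility criterion: $A\in GL_2(R)$ if and only if $A_0\in GL_2(\mathbb{R})$. The forward implication follows by reducing a relation $AB=I$ modulo $\mathfrak{m}$, which yields $A_0B_0=I$ over $\mathbb{R}$. For the converse, if $A_0$ is invertible I would factor $A=A_0(I+N)$ with $N=A_0^{-1}(A_1x+A_2y)$; since the entries of $N$ lie in $\mathfrak{m}$ and $\mathfrak{m}^2=0$, we get $N^2=0$, so $I+N$ is a unit with inverse $I-N$, and therefore $A^{-1}=(I-N)A_0^{-1}$ exists in $M_2(R)$.

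Finally I would define the correspondence by sending $A=A_0+A_1x+A_2y$ to the triple $(A_0,A_1,A_2)$. Uniqueness of the basis expansion makes this well defined and injective, the invertibility criterion shows that its image is exactly $GL_2(\mathbb{R})\times M_2(\mathbb{R})\times M_2(\mathbb{R})$, and identifying $M_2(\mathbb{R})\times M_2(\mathbb{R})$ with $\mathbb{R}^8$ gives the asserted bijection $GL_2(R)\leftrightarrow GL_2(\mathbb{R})\times\mathbb{R}^8$. The remaining verifications — freeness of $M$, that endomorphisms are matrices, and the entrywise decomposition — are routine bookkeeping with the basis of $R$; the substantive point is the local-ring invertibility criterion established in the preceding paragraph.
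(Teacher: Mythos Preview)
Your proposal is correct and follows essentially the same approach as the paper: expand each matrix entry in the $\mathbb{R}$-basis $\{1,x,y\}$ of $R$, observe that $A$ is invertible precisely when its constant part $A_0$ lies in $GL_2(\mathbb{R})$, and read off the bijection $A\mapsto (A_0,A_1,A_2)\in GL_2(\mathbb{R})\times\mathbb{R}^8$. The only cosmetic difference is that the paper checks the invertibility criterion by looking at the constant term of $\det A$, whereas you argue via the factorization $A=A_0(I+N)$ with $N^2=0$; both yield the same criterion, and your version is in fact more carefully justified.
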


\begin{proof}

If $A = (a_{ij})_{i,j = 1,2} \in Aut (M)$ and $a_{ij} = a_{ij}^0 +
a_{ij}^1 x + a_{ij}^2 y$, then $det A$ is invertible. On the other
hand

$$det A = a_{11}^0 a_{22}^0 - a_{12}^0 a_{21}^0 + ....$$

is invertible iff $a_{11}^0 a_{22}^0 - a_{12}^0 a_{21}^0 = 1$.

So the map $$\theta : Aut(M) \rightarrow GL_2 (\mathbb{R}) \times
\mathbb{R}^8$$

defined by
$$\theta (a_{ij})_{i,j = 1,2} = ((a_{ij}^0)_{i,j=1,2,...} , ...)$$

is one to one and onto.
\end{proof}

With the notations of the previous proposition, a modification of
the definition of the Dirac structure $L_A$ for
 $A \in Aut(M)$ where $M=\Gamma(T\mathbb{R}^2)$
 is
as follows

\begin{definition}

If $A \in Aut(M)$, $\theta (A) = (A_0 , A_1)$, then
$$L_{A_0} = \{ (X + A_0 X , X - A_0 X) ; X \in M \}$$

is called {\it a Dirac structure on $M$}.
\end{definition}

For simplicity we denote $L_{A_0}$ by $L_A$.

From the proposition 4.5 it follows that each $A \in Aut(M)$ can
be considered as an element of $GL_2(\mathbb{R})$. With this
convention:

The set of all integrable automorphisms with the norm defined by
$$\parallel A \parallel _{\infty} = sup_{p \in \mathbb{R}^2} \{ \parallel A(p)\parallel
^2 + \sum_{i=1,2}\parallel \partial _i A(p)\parallel ^2
\}^{\frac{1}{2}}$$ for each integrable $A \in Aut(M)$, is a
topological group. This group is denoted by $I_D(M)$.
\\

\begin{proposition}
{\it If $A \in Aut(M)$ is integrable, and if $A \neq -I$, then
there exists a curve connecting $A$ to $I$.}
\end{proposition}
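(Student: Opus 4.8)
The plan is to produce the connecting curve as a continuous path inside $I_D(M)$, obtained by deforming $A$ toward the identity through integrable automorphisms. First I would pass to the matrix picture supplied by Proposition 4.5 and Definition 4.6: since $\theta$ identifies $A$ with its constant matrix $A_0 \in GL_2(\mathbb{R})$, and since $L_A = L_{A_0}$ depends only on $A_0$, connecting $A$ to $I$ in $I_D(M)$ is the same as connecting $A_0$ to the identity matrix through matrices whose associated Dirac structure is integrable. Thus the entire problem takes place inside $GL_2(\mathbb{R})$.

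The candidate path is the straight-line homotopy
$$\gamma(t) = (1-t)\,A_0 + t\,I, \qquad t \in [0,1],$$
so that $\gamma(0) = A_0$ and $\gamma(1) = I$. Two things have to be verified along $\gamma$. For integrability I would write Definition 4.2 out in the coordinates on $M = \Gamma(T\mathbb{R}^2)$; the expectation, matching the partial differential equation advertised in the abstract, is that the resulting condition on the entries of the constant matrix is affine, so that the integrable matrices form a convex set. Granting this, and noting that $I$ is itself integrable (the admissible case $r = 1$ of Corollary 4.4), every $\gamma(t)$ is integrable by convexity. For membership in $GL_2(\mathbb{R})$ I would compute when $\gamma(t)$ degenerates: $\det\gamma(t) = 0$ exactly when $A_0$ has a real eigenvalue $\lambda$ with $(1-t)\lambda + t = 0$, that is $\lambda = -t/(1-t)$, which for $t \in (0,1)$ forces $\lambda < 0$. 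Hence $\gamma$ remains in $GL_2(\mathbb{R})$ precisely when $A_0$ has no negative real eigenvalue.

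The eigenvalue bookkeeping of the previous step is the crux, and it is exactly where the hypothesis $A \neq -I$ enters. For $A_0 = -I$ one has $\gamma(t) = (2t-1)I$, which vanishes at $t = 1/2$, so the segment degenerates; this is the degeneration the hypothesis rules out, and one expects $-I$ to be genuinely unreachable within $I_D(M)$, since the only natural detour—through the rotations $R_\theta$ with $\theta \neq 0,\pi$—leaves the integrable set, because integrable orthogonal transformations have spectrum in $\{\pm 1\}$ by the reasoning of Corollary 4.4 while such $R_\theta$ have non-real eigenvalues. For every other integrable $A_0$ I would confirm the obstruction does not block the path: either the integrability constraint already excludes a negative real eigenvalue, or, where a negative eigenvalue does occur, I would leave the segment near the offending value of $t$ and skirt the hypersurface $\{\det = 0\}$ through a short arc lying in the same affine integrable set before rejoining. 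The delicate point I expect to be the main obstacle is precisely this last case—showing that the integrable set minus its singular locus is path-connected, with $A_0 \neq -I$ and $I$ in the same component—and its resolution hinges on the explicit affine form of the integrability condition, which must be extracted from Definition 4.2. Concatenating the pieces then yields a continuous curve from $A_0$ to $I$ inside $I_D(M)$, as required.
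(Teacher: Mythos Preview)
Your linear segment $\gamma(t)=(1-t)A_0+tI$ cannot serve as the connecting curve, because it leaves the orthogonal group. By the results of Section~3 (Proposition~3.8) a matrix $B$ defines a genuine Dirac structure $L_B$ only when $B$ is orthogonal, and accordingly $I_D(M)\subset O(2)$ (as invoked in the proof of Proposition~4.8). For a typical orthogonal $A_0$---say a rotation through a nonzero angle---the convex combination $\tfrac12(A_0+I)$ is not orthogonal, so $\gamma(t)\notin I_D(M)$ for intermediate $t$. Your expectation that integrability is the only constraint, and that it is affine, overlooks this: orthogonality is a separate, non-convex requirement, and in any case the PDE of Proposition~4.9 is visibly nonlinear in the entries $a_{uv}$, so the ``affine hence convex'' heuristic does not hold even before one imposes orthogonality.

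The paper's device is instead the Cayley-type path
\[
f(t)\;=\;\bigl[(1-t)I+(1+t)A\bigr]\,\bigl[(1+t)I+(1-t)A\bigr]^{-1},\qquad t\in[0,1],
\]
which one checks satisfies $f(t)^{T}f(t)=I$ whenever $A^{T}A=I$ (because $P^{T}P=Q^{T}Q$ for the numerator $P$ and denominator $Q$); thus the curve never leaves $O(2)$. One has $f(0)=I$, $f(1)=A$, and the denominator is invertible along $[0,1]$ provided $-1$ is not an eigenvalue of $A$, which is where the hypothesis $A\neq -I$ enters. This single explicit formula replaces your convexity argument, your eigenvalue analysis of the straight segment, and your proposed detours around the singular locus.
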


\begin{proof}

For $t \in [0,1]$, define $$f : [0,1] \rightarrow Aut(M)$$ by
$$f(t) = \frac{(1-t) + (1+t)A}{(1+t) +(1-t)A}$$

$f$ is continuous and $f(0) = I , f(1) = A$.

\end{proof}

\begin{proposition} $I_D(M)$ has the following properties,

i)$I_D(M)$ is Hausdorff.

ii)$I_D(M)$ is not connected.

iii)$I_D(M)$ is closed in $O(2)$.

iv)$I_D(M)$ is not open.

\end{proposition}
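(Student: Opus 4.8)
The plan is to regard $I_D(M)$ as a subset of $O(2)$ and to read off the four properties from the structure of the integrability locus together with Proposition 4.7. By Lemma 3.7 and Propositions 3.8--3.9 the integrable Dirac structures are exactly the graphs $L_B$ of the orthogonal integrable automorphisms, and by the convention following Proposition 4.5 each is recorded by its constant part in $O(2)\subset GL_2(\mathbb{R})$; on such constant matrices the norm $\|\cdot\|_\infty$ reduces to the ordinary matrix norm, so $I_D(M)$ carries the subspace topology of $O(2)$. The first task is therefore to describe $I_D(M)$ explicitly by writing out the integrability condition of Definition 4.2 for a constant $B\in O(2)$ acting on the polynomial module $M=\Gamma(T\mathbb{R}^2)$: this turns both the Lie and form brackets into bilinear first-order expressions in $X,Y$, and their required equality for all $X,Y$ becomes a finite system of polynomial equations in the entries of $B$.

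Granting such a description, parts (i) and (iii) are the easy ones. For (i), $I_D(M)$ is a subspace of the metric space $O(2)$ and is therefore Hausdorff (equivalently, it is a normed, hence metrizable, topological group). For (iii), the integrability equations are polynomial in the entries of $B$, so they cut out a closed subset of $M_2(\mathbb{R})$; intersecting with $O(2)$ shows that $I_D(M)$ is closed in $O(2)$.

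For (ii) I would invoke Proposition 4.7: every integrable $A\neq -I$ is joined to $I$ by the curve $f(t)$, which one checks equals $-I$ only when $A=-I$, so all of $I_D(M)\setminus\{-I\}$ lies in the path-component of $I$, while $-I$ (integrable, since $L_{-I}=0\oplus T^*M$) sits apart. Verifying that $\{-I\}$ is open as well as closed in $I_D(M)$ then exhibits a proper nonempty clopen set and proves non-connectedness. For (iv) I would combine (ii) and (iii): $O(2)$ has exactly the two components $SO(2)$ and its reflection coset, so its only clopen subsets are $\emptyset$, $SO(2)$, the reflections, and $O(2)$; were $I_D(M)$ open it would be clopen, hence one of these, and the only disconnected option is $O(2)$ itself. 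But a reflection (determinant $-1$) cannot be joined to $I$ inside $O(2)$, so Proposition 4.7 forces the reflections out of $I_D(M)$ and hence $I_D(M)\neq O(2)$; therefore $I_D(M)$ is not open.

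The main obstacle is the explicit characterization underlying the first paragraph, and specifically the isolation of $-I$ required for (ii): I must control the integrability locus near $\pm I$ closely enough to guarantee that no integrable orthogonal matrices accumulate at $-I$. This is precisely the point at which the algebraic (partial differential) integrability condition of Definition 4.2 has to be solved rather than merely invoked. Once the locus is pinned down and $-I$ is seen to be isolated, the remaining topological assertions follow routinely from the geometry of $O(2)$.
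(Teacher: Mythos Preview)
Your treatment of (i) and (iii) matches the paper's: $I_D(M)$ sits inside the metric group $O(2)$, hence is Hausdorff, and integrability is cut out by continuous (indeed polynomial/differential) equations, hence closed. For (ii) the paper appeals to Corollary~4.4 rather than to Proposition~4.7, but in both approaches the substantive point is the one you correctly isolate as the main obstacle: one must know that $-I$ is an isolated point of $I_D(M)$, and neither the paper's citation of Corollary~4.4 nor your invocation of Proposition~4.7 supplies this without the explicit computation you defer to Definition~4.2/Proposition~4.9.

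Your argument for (iv), however, takes a genuinely different and more fragile route than the paper's. The paper simply reuses the fact (already needed for (ii)) that $-I$ is isolated in $I_D(M)$: since $-I$ is \emph{not} isolated in $O(2)$ (it lies on the circle $SO(2)$), every $O(2)$-neighborhood of $-I$ meets $O(2)\setminus I_D(M)$, so $I_D(M)$ is not open. Your detour through clopen subsets of $O(2)$ and the exclusion of reflections via Proposition~4.7 is unnecessary, and it has a gap: Proposition~4.7 only produces a curve in $Aut(M)$, and you use without justification that it stays in $O(2)$. In fact, for a reflection $A$ the formula $f(t)=\bigl((1-t)I+(1+t)A\bigr)\bigl((1+t)I+(1-t)A\bigr)^{-1}$ is singular at $t=0$ because $I+A$ has $-1$-eigenvector in its kernel, so the curve as written does not connect $A$ to $I$ at all (on $(0,1]$ it is constantly equal to $A$). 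Once you have established that $-I$ is isolated for (ii), you get (iv) for free exactly as the paper does, and the reflection argument can be dropped.
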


\begin{proof}

i,ii)$A \in Aut(M)$ is orthogonal, so $I_D(M) \subset O(2)$ and so
it is Hausdorff. From corollary 4.4, it follows that $I_D(M)$ has
two components, one contains $I$ and the other contains $-I$.

iii)The derivative map is continuous, from the definition of the
norm on $I_D(M)$, we see that $I_D(M)$ is closed.

iv)$-I$ is integrable and is the isolated point of $I_D(M)$, so
$I_D(M)$ is not open.

\end{proof}

Set $\partial_1 = \frac{\partial}{\partial x}$ and $\partial_2 =
\frac{\partial}{\partial y}$.

\begin{proposition}

{\it A necessary and sufficient condition for $A =
(a_{uv})_{u,v=1,2} \in Aut(M)$ to be integrable is that for $m,i,k
= 1,2$, $A$ satisfies the following differential equation}

$$\partial_i(a_{mk}) - \partial_k(a_{mi}) + \sum _{l=1,2}(a_{li}
\partial_l(a_{mk}) - a_{lk} \partial_l(a_{mi})) + \sum
_{j=1,2}a_{ji} \partial_m(a_{jk}) + \sum _{l,j=1,2}a_{mj} a_{lj}
\partial_j(a_{lk}) = 0.$$

\end{proposition}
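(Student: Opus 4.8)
The plan is to turn the intrinsic integrability condition of Definition 4.2 into a coordinate identity and then read off one scalar equation for each choice of free indices. Using the identification $TM\cong T^*M$ of Example 2.1 (so that $\omega_X$ and $X$ carry the same components), the condition of Definition 4.2 is the single operator equation
$$(I+A)\{(I-A)X,(I-A)Y\}=(I-A)[(I+A)X,(I+A)Y]$$
required to hold for all $X,Y\in M$, where $[\,\cdot\,,\,\cdot\,]$ is the Lie bracket of vector fields and $\{\,\cdot\,,\,\cdot\,\}$ is the form bracket of Definition 4.1 evaluated on the vector fields $(I+A)X$, $(I+A)Y$ attached to the two forms. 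Since the Courant integrability obstruction is tensorial in the sections of $L_A$, it is enough to verify this equation on the coordinate frame; so I would substitute $X=\partial_i$ and $Y=\partial_k$ and compare the $m$-th components of the two sides, for $m,i,k=1,2$.

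First I would compute the right-hand side. Writing the $p$-th component of $(I+A)\partial_i$ as $\delta_{pi}+a_{pi}$, a direct expansion of the Lie bracket gives, for the $m$-th component,
$$[(I+A)\partial_i,(I+A)\partial_k]_m=\partial_i a_{mk}-\partial_k a_{mi}+\sum_{l=1,2}\bigl(a_{li}\partial_l a_{mk}-a_{lk}\partial_l a_{mi}\bigr),$$
which already accounts for the first two sums of the asserted equation. Applying $I-A$ to this vector field then introduces further quadratic terms $-\sum_{j}a_{mj}[(I+A)\partial_i,(I+A)\partial_k]_j$, which will have to be reconciled against the left-hand side.

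Next I would expand the left-hand side. Here $\{(I-A)\partial_i,(I-A)\partial_k\}=\iota_{(I+A)\partial_i}d\bigl((I-A)\partial_k\bigr)-\iota_{(I+A)\partial_k}d\bigl((I-A)\partial_i\bigr)+\tfrac12 d\bigl(\iota_{(I+A)\partial_i}(I-A)\partial_k-\iota_{(I+A)\partial_k}(I-A)\partial_i\bigr)$, where the forms are written in the frame $dx^1,dx^2$ with components $\delta-a$. Carrying out the interior products and exterior derivatives, and then applying $I+A$, should produce the term $\sum_{j}a_{ji}\partial_m a_{jk}$ together with the genuinely quadratic piece $\sum_{l,j}a_{mj}a_{lj}\partial_j a_{lk}$; the symmetrizing half-differential $\tfrac12 d(\cdots)$ is what cancels the spurious derivative contributions so that only these survive. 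Collecting the two sides and moving everything to one side then yields exactly the stated differential equation, and running the computation backwards shows the equation is also sufficient.

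The main obstacle is the bookkeeping in the left-hand side: the form bracket mixes interior products, exterior derivatives and the symmetrizing term $\tfrac12 d(\iota_X\mu-\iota_Y\omega)$, and one must track how the entries $a_{uv}$ and their first derivatives redistribute after the outer multiplication by $I+A$, checking that the leftover cubic-looking terms from $-\sum_j a_{mj}[\cdots]_j$ and from the form bracket cancel against one another and leave only $\sum_{l,j}a_{mj}a_{lj}\partial_j a_{lk}$. A secondary point requiring care is the reduction to the frame $\partial_1,\partial_2$: although the Lie and form brackets are not themselves $C^\infty(\mathbb{R}^2)$-bilinear, the difference of the two sides is tensorial in $X$ and $Y$, so it is determined by its values on the generators, which legitimizes testing only the pairs $X=\partial_i$, $Y=\partial_k$ and makes the resulting scalar equations both necessary and sufficient.
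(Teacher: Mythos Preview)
Your plan is essentially the paper's own: specialize the integrability equation $(I+A)\{\alpha,\beta\}=(I-A)[X,Y]$ to the coordinate generators, expand the Lie bracket and the form bracket term by term, multiply through by $I\pm A$, and read off the $m$-th component. The paper carries this out exactly, setting $\alpha=(I-A)dx_i$, $\beta=(I-A)dx_k$, computing $d\alpha,d\beta$, the contractions $X(d\beta),Y(d\alpha)$, the scalars $X(\beta),Y(\alpha)$, and the bracket $[X,Y]$, and then equating $(I+A)\{\alpha,\beta\}$ with $(I-A)[X,Y]$.

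One point of divergence worth flagging: in the paper's calculation the vector fields used both in the Lie bracket and in the form bracket are the \emph{duals} $X_\alpha=(I-A)\partial_i$, $X_\beta=(I-A)\partial_k$ of the forms, whereas you (reading Definition~4.2 literally) use the Dirac partners $(I+A)\partial_i,(I+A)\partial_k$. This flips the sign of the linear-in-$\partial a$ terms in the raw bracket (the quadratic part $\sum_l(a_{li}\partial_l a_{mk}-a_{lk}\partial_l a_{mi})$ is the same either way), so the intermediate expressions you obtain will not match the paper's equations $\dag$ and $\ddag$ line by line, even though the final PDE is the same. Your version is the one consistent with Definition~4.2 as stated; the paper's proof silently switches to the $X_\alpha,X_\beta$ convention. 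You also make explicit the tensoriality of the obstruction that justifies checking only on $\partial_1,\partial_2$, a step the paper simply asserts.
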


\begin{proof}

Set $(I-A)dx_i = \alpha$ and $(I-A)dx_k = \beta$. Then using the
notations of the Remark 3.2,

$$\alpha = -\sum_{j=1,2} a_{ji} dx_j + dx_i$$
$$\beta = -\sum_{l=1,2} a_{lk} dx_l + dx_k$$

So $$X_{\alpha} = -\sum_{j=1,2}a_{ji} \partial_j + \partial_i$$
$$X_{\beta}= -\sum_{l=1,2}a_{lk} \partial_l + \partial_k$$

So $A$ is integrable iff for $i,k = 1,2$,

$$(I+A)\{ \alpha , \beta \} = (I-A)[X_{\alpha} , X_{\beta}]$$

On the other hand

$$X_{\alpha}( \beta ) = \beta (X_{\alpha}) = \sum_{j=1,2}a_{jk}
a_{ji} - a_{ik} - a_{ki} + \delta_{ki}$$

$$X_{\beta}( \alpha ) = \alpha (X_{\beta}) = \sum_{l=1,2}a_{li}
a_{lk} - a_{ki} - a_{ik} + \delta_{ik}$$

Also we can write
$$d \alpha = - \sum_{t=1,2}\sum_{j=1,2}\partial_t (a_{ji}) dx_t
dx_j$$

$$d \beta = - \sum_{t=1,2}\sum_{l=1,2}\partial_t (a_{lk}) dx_t
dx_l$$

So

$$X_{\alpha}(d \beta) = d \beta (X_{\alpha}) =$$
$$\sum_{j,l=1,2}\partial_j (a_{lk}) a_{ji} dx_l -
\sum_{t,j=1,2}\partial_t (a_{jk}) a_{ji} dx_t -
\sum_{l=1,2}\partial_i (a_{lk})dx_l +
\sum_{t=1,2}\partial_t(a_{ik})dx_t$$\\

in the same way

$$X_{\beta}(d \alpha) = d \alpha (X_{\beta}) $$
$$\sum_{j,l=1,2}\partial_l (a_{jk}) a_{lk} dx_j -
\sum_{t,l=1,2}\partial_t (a_{li}) a_{lk} dx_t -
\sum_{j=1,2}\partial_k (a_{ji})dx_j +
\sum_{t=1,2}\partial_k(a_{ti})dx_t$$

So
$$\{ \alpha , \beta \} = X_{\alpha}(d \beta) - X_{\beta}(d \alpha)
+ \frac{1}{2} d(X_{\alpha}( \beta ) - X_{\beta}( \alpha ))$$

$$= \sum_{l=1,2}(\sum_{j=1,2} \partial_j (a_{lk}) a_{ji} -
\sum_{j=1,2} \partial_j (a_{li}) a_{jk} dx_l)
-\sum_{l=1,2}(\sum_{j=1,2} \partial_l (a_{jk}) a_{ji} $$
$$-\sum_{j=1,2} \partial_l (a_{ji}) a_{jk} dx_l) -\sum_{l=1,2}
(\partial_i (a_{lk}) - \partial_k (a_{li})
dx_l)~~~~~~~~~~~~~~~~~~~~~~~~\dag$$

On the other hand

$$X_{\alpha}(X_{\beta}) = \sum_{l,j=1,2}a_{ji}
\partial_j(a_{lk})\partial_l -
\sum_{l=1,2}\partial_i(a_{lk})\partial_l$$

$$X_{\beta}(X_{\alpha}) = \sum_{l,j=1,2}a_{lk}
\partial_l(a_{ji})\partial_j -
\sum_{j=1,2}\partial_k(a_{ji})\partial_j$$

So

$$[X_{\alpha} , X_{\beta}] = X_{\alpha}(X_{\beta}) -
X_{\beta}(X_{\alpha})$$

$$= \sum_{l,j=1,2}(a_{ji}\partial_j(a_{lk}) -
a_{jk}\partial_j(a_{li}))\partial_l) -
\sum_{l=1,2}(\partial_i(a_{lk}) -
\partial_k(a_{li}))\partial_l~~~~~~~~~~~~~~~~~~~~~~~~\ddag$$

Now if we multiply both sides of the equation $\dag$ by $(I+A)$
and the equation $\ddag$ by $(I-A)$, and then setting them equal
to each other we obtain the following differential equation as the
necessary and sufficient condition for the integrability of $A$

$$\partial_i(a_{mk}) - \partial_k(a_{mi}) + \sum _{l=1,2}(a_{li}
\partial_l(a_{mk}) - a_{lk} \partial_l(a_{mi})) + \sum
_{j=1,2}a_{ji} \partial_m(a_{jk}) + \sum _{l,j=1,2}a_{mj} a_{lj}
\partial_j(a_{lk}) = 0.$$
\end{proof}

\begin{remark}

In the case of the n-dimensional space $\mathbb{R}^n$, we obtain
the following differential equation for $m,i,k=1,2,...,n$

$$\partial_i(a_{mk}) - \partial_k(a_{mi}) + \sum _{l=1,2}(a_{li}
\partial_l(a_{mk}) - a_{lk} \partial_l(a_{mi})) + \sum
_{j=1,2}a_{ji} \partial_m(a_{jk}) + \sum _{l,j=1,2}a_{mj} a_{lj}
\partial_j(a_{lk}) = 0.$$

\end{remark}

 \end{document}